\numberwithin{equation}{section}
\renewcommand{\Re}{{\operatorname{Re\,}}}
\renewcommand{\Im}{{\operatorname{Im\,}}}
\newcommand{\wh}{\widehat}
\newcommand{\Tr}{{{\operatorname{Tr}}}}
\newtheorem{theo}{{\sc \bf Theorem}}[section]
\newtheorem{cor}[theo]{{\sc \bf Corollary}}
\newtheorem{prop}[theo]{{\sc \bf Proposition}}
\newenvironment{rem}{\medskip\noindent{\it Remark:\/} }{\medskip}
\begin{document}

\title{A note on spectral properties of the $p$-adic tree}

\author{Slawomir Klimek}
\address{Department of Mathematical Sciences,
Indiana University-Purdue University Indianapolis,
402 N. Blackford St., Indianapolis, IN 46202, U.S.A.}
\email{sklimek@math.iupui.edu}

\author{Sumedha Rathnayake}
\address{Department of Mathematical Sciences,
Indiana University-Purdue University Indianapolis,
402 N. Blackford St., Indianapolis, IN 46202, U.S.A.}
\email{srathnay@iupui.edu}

\author{Kaoru Sakai}
\address{Department of Mathematical Sciences,
Indiana University-Purdue University Indianapolis,
402 N. Blackford St., Indianapolis, IN 46202, U.S.A.}

\date{\today}\email{ksakai@iupui.edu} 

\begin{abstract}
We study the spectrum of the operator $D^*D$, where the operator $D$, introduced in \cite{KMR}, is a forward derivative on the $p$-adic tree, a weighted rooted tree associated to $\mathbb Z_p$ via Michon's correspondence. We show that the spectrum is closely related to the roots of a certain $q-$hypergeometric function and discuss the analytic continuation of the zeta function associated with $D^*D$.
\end{abstract}

\maketitle
\section{Introduction}

This note builds on our previous paper \cite{KMR} which described a new spectral triple $(\mathcal A, \mathcal H,\mathcal D)$ for the $C^*$-algebra of continuous functions on the space of $p$-adic integers $\mathbb Z_p$. The construction of this spectral triple utilized a coarse-grained approximation of the space $\mathbb Z_p$ and was partially motivated by recent work \cite{BP} on  spectral triples for more general Cantor sets.  Our considerations  closely resembled standard examples of geometric spectral triples that use the usual differentiation for the definition of the operator $\mathcal D$. 

The geometric coarse-grained approximation of $\mathbb Z_p$, which we called the $p$-adic tree, is a weighted rooted tree  $\{V,E\}$ associated to $\mathbb Z_p$ via Michon's correspondence \cite{BP}. The set of vertices $V$ of the $p$-adic tree consist of balls in $\mathbb Z_p$, with $\mathbb Z_p$ itself being the root of the tree. There is an edge between two vertices $v$ and $v'$ if $v' \subset v$ and $v'$ has the biggest diameter smaller than the diameter of $v$. 

Now consider the Hilbert space $H$ consisting of weighted $\ell^2$ functions living on the vertices of the $p$-adic tree:
\begin{equation}\label{Hilbdef}
H= \{f: V \rightarrow \mathbb C : \sum_{v \in V}|f(v)|^2 w(v)< \infty\}.
\end{equation}
Here the discrete-valued weight function $w: V \rightarrow \mathbb R_{\geq 0}$ is defined by: $w(v) = $ the volume of the ball $v$ with respect to the additive Haar measure $d_px$. For the space of $p$-adic integers $\mathbb Z_p$, the volume of a ball is equal to its diameter. It is useful to view $H$ as the subspace of $L^2(\mathbb Z_p,d_px)$ consisting of locally constant functions on $\mathbb Z_p$.

Then we introduced an unbounded operator $D$ on $H$ defined on its maximal domain $\{f\in H : Df\in H\}$ by 
\begin{equation}\label{Ddef}
Df(v)=\frac 1{\omega(v)}\left(f(v)-\frac 1{(\textrm{deg } v-1)} \sum_{\substack {v'\in V \\ v'\sqsubset\ v}}f(v')\right),
\end{equation}
where deg $v$ is the degree of the vertex $v$ and $v' \sqsubset v$  means that there is an edge between $v$ and $v'$. One can think of $D$ as a natural discrete derivative for (complex valued) functions on $\mathbb Z_p$.
This forward tree derivative  was then used to construct the Dirac type operator $\mathcal D$, necessary for the spectral triple. 

It was verified in \cite{KMR} that the operator $D$ is invertible with compact inverse, implying that $D^*D$ has compact resolvent. Consequently, the spectrum of $D^*D$ is discrete with only possible accumulation point at infinity. 

In this paper our main interest is to find the spectrum of the operator $D^*D$. By re-parametrizing the vertices of the $p$-adic tree using the set of parameters $V \cong \mathcal G_p \times \mathbb Z_{\geq 0}$, where $\mathcal G_p=\mathbb Q_p / \mathbb Z_p$ is the Pr\"ufer $p$-group, we can decompose $H$ into invariant subspaces: $H=\bigoplus_{g\in \mathcal G_p} H_g$, where $H_g\cong \ell^2(\mathbb Z_{\geq 0})$. This allows the decomposition of the operators $D$ and $D^*$ into a direct sum of much simpler operators
$D=\bigoplus_{g\in \mathcal G_p} D_g$, and $D^*=\bigoplus_{g\in \mathcal G_p} D_g^*$,
where $D_g$ is the restriction of $D$ to $H_g$. Identifying the Pr\"ufer group with the set of numbers $\{ g=\frac r{p^m} :\; 0 \leq r < p^m, p\nmid r \}$, the operators $D_g, D_g^*$ for $g=\frac r{p^m}$ can be written as $D_g=p^m D_0$ and $D_g^*=p^m D_0^*$ where $D_0$ is the operator on $\ell^2(\mathbb Z_{\geq 0})$ given by $
D_0 f_n= p^n (f_n-f_{n+1})$.
Consequently, $D^*D=\bigoplus_{g \in \mathcal G_p} p^{2m} D_0^*D_0$ and the problem of finding the spectrum of $D^*D$ is reduced to finding the spectrum of $D_0^*D_0$. 

It will be verified in this paper that the eigenvalues of $D_0^* D_0 $ are the roots of the $q$-Bessel function $_1\phi_1\left(\substack {0\\q};q,\lambda \right)$ with $q=p^{-2}$. In \cite{ABC} the authors give analytic bounds for these roots and discuss their asymptotic behavior. Therefore, we have a good understanding of the spectrum of $D^*D$. In particular, using the results of \cite{ABC}, we were able to obtain several results on analytic structure and analytic continuation of the zeta function of  $D^*D$.

Part of the motivation for studying the spectrum of $D^*D$ is that it may have some relevance for developing the structure of $p$-adic quantum mechanics. The operator $D^*D$, a natural analog of the laplacian, can be taken as an alternative starting point for the theory of $p$-adic Schr$\ddot{ \textrm{o}}$dinger operators, see \cite{VVZ}. 

The content of this paper is organized as follows. In section 2 we give a brief introduction to $p$-adic harmonic analysis and then describe the $p$-adic tree associated to the ring $\mathbb Z_p$. In the next section we analyze the forward derivative $D$ on the $p$-adic tree and its adjoint $D^*$ in $H$. We also describe the re-parametrization of the $p$-adic tree that leads to a decomposition of the operator $D^* D$ into a direct sum of simpler operators $D_g^* D_g $. Section 4 discusses the calculation of the spectrum of $D_0^* D_0 $ along with an ``elliptic regularity" theorem that shows that the corresponding eigenfunctions are of a special form. In the last section we discuss some spectral properties of the operator $D^* D$ and the analytic continuation of the zeta function associated to it.  
\bigskip


\section{Definitions and Notation}

\subsection{Fourier Analysis in $\mathbb Z_p$} 
We start this section by briefly recalling some of the basic results and  notation we introduced in \cite{KMR} regarding harmonic analysis in the space $\mathbb Z_p$. For further reference and more details on this subject we refer to \cite{HR},\cite{KMR} and \cite{VVZ}.
\smallskip

The characters on $\mathbb Q_p$, the set of all $p$-adic numbers, are given by maps $\chi_a: \mathbb Q_p \rightarrow \mathbb C$ defined by $\chi_a(x)=e^{2\pi i \{ax\}}$ where $a  \in \mathbb Q_p$, and $\{ax\}$ in the exponent is the fractional part of the p-adic number $ax$. Two characters $\chi_a(x)$ and $\chi_b(x)$ are equal on ${\mathbb Z_p}$ if and only if $a-b\in \mathbb Z_p$. Consequently we see that the dual groups of $\mathbb Q_p$ and $\mathbb Z_p$ (denoted $\widehat {\mathbb Q_p}$, $\widehat {\mathbb Z_p}$ ) are $\widehat {\mathbb Q_p}=\mathbb Q_p$ , $\widehat {\mathbb Z_p}= \mathbb Q_p / \mathbb Z_p$. The dual group of $\mathbb Z_p$, called the Pr\"ufer group $\mathcal G_p$, can also be identified with a group of roots of unity given by 
\begin{equation}\label{Prufdef}
\wh {\mathbb Z_p} \cong \{e^{2 \pi i \frac {k}{p^n}} :  n \in \mathbb Z_{\geq 0}, p \nmid k \in \mathbb Z\}.
\end{equation}

We let $\mathcal E(\mathbb Z_p)$ be the space of locally constant functions (test functions), i.e. the set of functions $\phi: \mathbb Z_p \rightarrow \mathbb C$ such that for every $x \in \mathbb Z_p$ there is a neighborhood $U_x$ of $x$ on which $\phi$ is constant. The space of linear functionals on $\mathcal E(\mathbb Z_p)$ (distributions on $\mathbb Z_p$) equipped with the weak$^*$-topology is denoted by $\mathcal E^*(\mathbb Z_p)$.

 If $d_px$ denotes the Haar measure on $(\mathbb Z_p, +)$ normalized so that $\int_{\mathbb Z_p}d_px=1$, then we define the Fourier transform of a test function $\phi \in \mathcal E(\mathbb Z_p)$ as the function $\wh \phi$ on $\wh {\mathbb Z_p}$ given by 
 $$ \wh \phi([a])=\int_{\mathbb Z_p} \phi(x) \overline{\chi_a(x)}d_px.$$ 
 For a locally constant function, only a finite number of Fourier coefficients will be nonzero. Thus, the Fourier transform gives an isomorphism between $\mathcal E(\mathbb Z_p)$ and $\mathcal E(\wh{ \mathbb Z_p})$, where the latter in our case is the space of all those functions on $\wh{ \mathbb Z_p}$ that are zero almost everywhere. The inverse Fourier transform is given by 
 $$\phi(x)= \sum_{[a]\in \wh {\mathbb Z_p}}  \wh \phi([a]) \chi_a(x).$$
 
 For a distribution $T \in \mathcal E^*(\mathbb Z_p)$ the Fourier transform is the function $\wh T$ on $\wh {\mathbb Z_p}$ defined by $\wh T([a])= T(\overline{\chi_a(x)})$. Once again, the distributional Fourier transform gives an isomorphism between $\mathcal E^*(\mathbb Z_p)$ and $\mathcal E^*(\wh {\mathbb Z_p})$. The inverse Fourier transform of a distribution is given by $$T(\chi_a(x))= \sum_{[a]\in \wh {\mathbb Z_p}} \wh T([a]) \chi_a(x).$$ 
 \bigskip
 
\subsection{The $p$-adic tree}
We recall the construction of the weighted rooted tree $\{V,E\}$ associated to the Cantor metric space $(\mathbb Z_p, \rho_p)$, the space of $p$-adic integers equipped with the usual $p$-adic metric $\rho_p$, via Michon's correspondence \cite{BP}, \cite{KMR}. The symbols $V$ and $E$ above are used to denote the set of vertices and the edges of the  tree, and we call this tree the $p$-adic tree. 
\smallskip

The vertices of the $p$-adic tree are the balls in $\mathbb Z_p$. Since $\mathbb Z_p$ is a totally disconnected space, the range of $\rho_p$ is countable and consists of numbers of the form $p^{-n}$, $n \in \mathbb Z$ and zero.  Therefore, if we let $V_n$ be the set of balls of diameter $p^{-n}$ then the set of vertices $V$ has the natural decomposition $V= \bigcup_{n=0}^{\infty} V_n$. The set of edges $E$ has the decomposition $E= \bigcup_{n=0}^{\infty} E_n$ where an edge $e=(v,v')$ between two vertices $v, v'$ belongs to $E_n$ if  $v \in V_n$, $v' \in V_{n+1}$ and $v' \subset v$. 

The compactness of the space $\mathbb Z_p$ implies that the number of balls of diameter $p^{-n}$ (hence the number of vertices) for fixed $n$, and the degree of each vertex are finite. Now we observe the following fact:

\begin{prop}
Every ball of radius $p^{-n}$ contains a unique integer $k$ such that $0 \leq k < p^n$.
\end{prop}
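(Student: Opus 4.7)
The plan is to use the fact that in $\mathbb{Z}_p$, a ball of radius $p^{-n}$ is precisely a coset of the ideal $p^n\mathbb{Z}_p$, so the balls of that radius are in bijection with $\mathbb{Z}_p/p^n\mathbb{Z}_p$.

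First I would fix a ball $v$ of radius $p^{-n}$ and pick any point $y\in v$, so that $v=y+p^n\mathbb{Z}_p$. Using the fact that $\mathbb{Z}$ is dense in $\mathbb{Z}_p$ (which is clear from the series expansion $y=\sum_{j\geq 0} a_j p^j$ with $a_j\in\{0,1,\dots,p-1\}$), I would take the partial sum $k=\sum_{j=0}^{n-1}a_j p^j$. By construction $k$ is an ordinary integer with $0\leq k<p^n$, and $y-k\in p^n\mathbb{Z}_p$, so $k\in v$. This establishes existence.

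For uniqueness, suppose $k$ and $k'$ are two integers in $v$ with $0\leq k,k'<p^n$. Then $k-k'\in p^n\mathbb{Z}_p$, i.e. $|k-k'|_p\leq p^{-n}$, so $p^n$ divides $k-k'$ as ordinary integers. But $|k-k'|<p^n$ in the usual absolute value, which forces $k-k'=0$.

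The argument is routine once the identification of balls of radius $p^{-n}$ with cosets of $p^n\mathbb{Z}_p$ is in place; there is no real obstacle, as both existence (from the digit expansion) and uniqueness (from the size constraint) are immediate. The only thing worth being careful about is the convention that a ball of radius $p^{-n}$ in this ultrametric setting is a genuine closed-and-open set equal to $y+p^n\mathbb{Z}_p$ for any of its points, which is why the integer $k$ does not depend on the choice of $y$.
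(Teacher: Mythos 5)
Your proof is correct: identifying a ball of radius $p^{-n}$ with a coset $y+p^n\mathbb{Z}_p$, getting existence from truncating the digit expansion, and getting uniqueness from the fact that an integer divisible by $p^n$ and of ordinary absolute value less than $p^n$ must vanish is exactly the standard argument. The paper itself does not reprove the proposition (it cites \cite{KMR}), but your argument is the expected one and there is nothing to add.
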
 
 A proof of this proposition can be found in \cite{KMR}. From this observation we see that there is a one-to-one correspondence between the set of integers $0 \leq k < p^n$ and the set $V_n$ of balls of diameter $p^{-n}$. Therefore, the set of vertices has the natural parametrization: 
 \begin{equation}\label{para}
 V\cong S:=\{(n,k) : n=0,1,2,\ldots, \;0 \leq k < p^n\}.
 \end{equation}
Two vertices $(n,k)$ and $(n+1,k')$ are connected by an edge if and only if $k'-k$ is divisible by $p^n$. Thus, a given vertex $(n,k)$ will be connected (via edges) to exactly $p$ vertices in $V_{n+1}$. Also, we introduce a weight function $w: V \rightarrow \mathbb R^+$ by $w(v)=$ volume $(v)$ with respect to the Haar measure $d_px$. If $v\in V_n$ then $w(v)=p^{-n}$.
\bigskip 
 
 
\section{The Operator $D$}

\subsection{A Forward Derivative on the $p$-adic tree }
Due to the decomposition $V= \bigcup_{n=0}^{\infty} V_n$ of the set of vertices, any complex valued function $f$ on $V$ can be written as a sequence  $\{f_n\}$ of complex valued functions on $V_n$. Let $\mathcal E^*(V)$ denote the space of all complex valued functions living on the vertices of the $p$-adic tree. By the discussion in the previous section we can identify each $V_n\cong \mathbb Z_p / p^n \mathbb Z_p \cong \mathbb Z / p^n \mathbb Z$ with the finite additive group $\mathbb Z / p^n \mathbb Z$. Consequently, we can introduce the Fourier transform of a function $f \in \mathcal E^*(V)$ to be the discrete Fourier transform on each $V_n$ given by 
\begin{equation*}
\wh f_n(l)= \frac 1{p^n} \sum_{k=0}^{p^n-1}f_n(k)e^{-2 \pi i \frac{kl}{p^n}}, \;\; 0 \leq l < p^n.
\end{equation*}

Because the characters of $\mathbb Z / p^n \mathbb Z$ satisfy the orthogonality condition: 
\begin{equation}\label{orthocon}
\sum_{0\leq s<p^j}e^\frac{-2\pi iks}{p^j}=\begin{cases}
0 & \textrm{ if }\ p^j\nmid k \\ 
p^j & \textrm{ if }\ p^j\mid k,
\end{cases}
\end{equation}
we obtain the following Fourier inversion formula:
\begin{equation}\label{FIof_f}
f_n(k)=\sum_{0\leq l<p^n}\wh f_n(l)e^{\frac{2\pi ikl}{p^n}}.
\end{equation}

We also remark that the $p$-adic tree can be thought to be self dual, $\wh V\cong V$, due to the fact that each $\mathbb Z / p^n \mathbb Z$ (hence $V_n$) is self dual. Thus, the Fourier transform on the $p$-adic tree is an isomorphism between the space  $\mathcal E^*(V)$ of functions on the vertices of the $p$-adic tree and the space  $\mathcal E^*(\wh V)$ of functions on the vertices of the dual tree. Additionally, via the Parseval's identity, the Fourier transform gives an isomorphism between the Hilbert space $H= \ell^2(V,w)$, of \eqref{Hilbdef}, and 
$\wh H:= \ell^2(\wh V)$, where the latter Hilbert space has no weight in the inner product.

%
 
Notice that the decomposition $V= \bigcup_{n=0}^{\infty} V_n$ induces a Hilbert space decomposition $H= \bigcup_{n=0}^{\infty}\ell^2(V_n, p^{-n})$. Using the parametrization \eqref{para} introduced in the previous section, we can write the action of the operator $D$ of the formula \eqref{Ddef} on the components $f_n$ of $f$ as:

\begin{equation*}
 (Df)_n(k)=p^n \left(f_n(k)-\frac 1p \sum_{0 \leq j < p} f_{n+1}(k+jp^n)\right).
\end{equation*} 
We note here that the choice of the domain for $D$, as well as all other unbounded operators below, is the maximal domain, i.e $\{f\in H : Df\in H\}$.

Using the Fourier transform of $f_n$ and orthogonality of the characters \eqref{orthocon} we write the following equivalent formula for $D$:

\begin{equation*}
 Df_n(k)=p^n \sum_{0 \leq l < p} \left(\wh f_n(l)- \wh f_{n+1}(pl)\right) e^{\frac{2 \pi i kl}{p^n}}.
\end{equation*} 
Hence, in Fourier transform, the operator $D$ becomes $\wh D$ given by:
\begin{equation}\label{Dhat}
\wh D \wh f_n(l)= p^n (\wh f_n(l)- \wh f_{n+1}(pl)),
\end{equation}
which is an unbounded operator on $\wh H$.
Notice that $D$ and $\wh D$ are unitarily equivalent, but it is easier to work with the latter.

The adjoint $D^*$ of $D$ is given by
\begin{equation}\label{D*}
D^*g_n(k)=p^n\left[g_n(k)-\frac 1p g_{n-1}(k\mod p^{n-1})\right],
\end{equation}
assuming $g_{-1}(0)=0$. 

Later we will need the following formula for the adjoint $\wh {D^*}$ of $\wh D$:

\begin{equation}\label{Dhat*}
\wh {D^*} \wh g_n(l)=\begin{cases}
p^n \wh g_n(l)& \textrm{ if }\ p\nmid l \\ 
p^n\left(\wh g_n(l)- \frac 1p \wh g_{n-1}\left(\frac lp\right)\right) & \textrm{ otherwise.}
\end{cases}
\end{equation} 

It was verified in \cite{KMR} that $D$ and $D^*$ are invertible with compact inverses.
\smallskip

\subsection{Invariant Subspaces of $H$}

The key observation that allows us to find the spectrum of $D^*D$ is that we can decompose the Hilbert space $H$ into invariant subspaces by means of a different parametrization of the $p$-adic tree. 
\smallskip

The original parametrization \eqref{para} of the set of vertices of $p$-adic tree was done by using the set
 \begin{equation*}
 S:=\{(n,k) :  n=0,1,2,\ldots, 0 \leq k < p^n\}.
 \end{equation*}
Given a pair $(n,k)$ in $S$ notice that we can write $k=rp^l$ with $p\nmid r$ and $l\in \{0,1, \ldots n-1\}$, by factoring out the highest power of $p$ that divides $k$. Such a representation of $k$ will be uniquely determined by $r$ and $l$. If we associate $n$ with $\frac{k}{p^n}=\frac r{p^{n-l}}= \frac r{p^m}$ where $m=n-l$, which is a unique representation of $n$ in terms of $r$ and $m$, then we have the correspondence $(n,k) \mapsto \left(\frac r{p^m},l\right)$.

Conversely, given a pair $\left(\frac r{p^m},l\right)$ where $0 \leq r < p^m$, $p \nmid r$ and $l \in \{0,1,2, \ldots\}$ we can make the unique association $\left(\frac r{p^m},l\right) \mapsto (m+l, rp^l)$.
Thus, if
\begin{equation*}
S':= \left\{\left(\frac r{p^m},l\right) : 0 \leq r < p^m,\ p\nmid r,\ l=0,1,2,\ldots \right\}
\end{equation*}
then we have the one-to-one correspondence between the sets $S$ and $S'$ given by $(n,k) \leftrightarrow \left(\frac r{p^m}, l\right)$. 

In fact, the set of numbers $\left\{ g=\frac r{p^m} :\; 0 \leq r < p^m, p\nmid r \right\}$ is isomorphic to the Pr\"ufer group $\mathcal G_p$ defined in (\ref{Prufdef}). Therefore, $V\cong \wh V \cong \mathcal G_p \times \mathbb Z_{\geq 0}$. Consequently we obtain the following new decomposition of the Hilbert space $\wh H$:

\begin{equation*}
\wh H= \ell^2(S)\cong \ell^2(S')=\bigoplus_{\frac r{p^m}\in \mathcal G_p} \ell^2(\mathbb Z_{\geq 0})=: \bigoplus_{g\in \mathcal G_p} \wh H_g
\end{equation*}
where $\wh H_g=\ell^2(\mathbb Z_{\geq 0})$.

\smallskip

We will now look at the operators $\wh D$ and $\wh {D^*}$ in the new coordinates.  Using formula \eqref{Dhat} we compute:
 \begin{equation*}
  \wh D \wh f \left(\frac r{p^m},l\right)= p^{m+l} \left(\wh f\left(\frac r{p^m},l\right)-\wh f\left(\frac r{p^m},l+1\right)\right).
  \end{equation*}
 
 Equation \eqref{Dhat*} yields:
  \begin{equation*}
 \begin{aligned}
  \wh {D^*}\wh f\left(\frac r{p^m},l\right )&=\begin{cases}
  p^{m+l} \wh f\left(\frac r{p^m},0\right) & \textrm{ if } l=0 \\ 
  p^{m+l} \left(\wh f\left(\frac r{p^m},l\right)-\frac 1p \wh f \left(\frac r{p^m}, l-1\right)\right) & \textrm{ otherwise }.
 \end{cases}\\
 \end{aligned}
 \end{equation*}
 
If we assume that $\wh f \left(\frac r{p^m}, -1\right)=0$ for any $r,m$ then we can rewrite the formula for  $\wh {D^*}$ as:
\begin{equation}\label{NDhat*}
\wh {D^*}\wh f\left(\frac r{p^m},l\right )=p^{m+l} \left(\wh f\left(\frac r{p^m},l\right)-\frac 1p \wh f \left(\frac r{p^m}, l-1\right)\right).
\end{equation}
Notice that, in the new coordinates, the operators $\wh D$ and $\wh{D^*}$ affect only the second coordinate $l$ and consequently each $H_g$ is an invariant subspace. Thus, by letting $\widehat D_g:= \widehat D \vert_{H_g}$ and  $\widehat {D_g^*}:= \widehat {D^*} \vert_{H_g}$ of $\widehat {D_g}$, we have the following decompositions of the operators $\widehat D$ and $\widehat {D^*}$:
\begin{equation}\label{Dg}
\widehat D=\bigoplus_{g\in \mathcal G_p} \widehat {D_g} \textrm{\,\, and \,\,} \widehat {D^*}=\bigoplus_{g\in \mathcal G_p} \widehat {D_g^*}.
\end{equation}
\smallskip

Let $\widehat {D_0}$ be the operator on $\ell^2(\mathbb Z_{\geq 0})$ given by $\widehat {D_0}f(l)=p^l\left(f(l)-f(l+1)\right)$. It will be more convenient to switch to subscript notation and write:
\begin{equation}\label{D_0def}
(\widehat {D_0}f)_n=p^n \left(f_n-f_{n+1}\right).
\end{equation}
The adjoint of $\widehat {D_0}$ is given by
\begin{equation}\label{D_0*def}
(\widehat {D_0^*g})_n=p^n \left(g_n-\frac 1p g_{n-1}\right).
\end{equation}
From formula \eqref{NDhat*} we see that if $g=\frac r{p^m}$ then 
\begin{equation*}
\widehat {D_g}=p^m \widehat {D_0} \textrm{ \,\,and \,\,} \widehat {D_g^*}= p^m \widehat {D_0^*}.
\end{equation*}
Consequently, $\widehat {D^*}\widehat D$ has the decomposition;
\begin{equation}\label{decomD*D}
\widehat {D^*}\widehat D=\bigoplus_{g\in \mathcal G_p} \widehat {D_g^*}\widehat {D_g}=\bigoplus_{g\in \mathcal G_p} p^{2m} (\widehat {D_0^*}\widehat {D_0}).
\end{equation}

Thus, the key step in finding the spectrum of $D^*D$ is to compute the spectrum of the operator  $ D_0^* D_0$ on $\ell^2(\mathbb Z_{\geq 0})$. We devote the next section to a description of this spectrum. 

\bigskip

\maketitle
\section{Spectrum of $D_0^*D_0$}

The fact that $D^{-1}$ is compact implies that the operators $D^*D$ and $ D_0^* D_0$ have compact resolvent. Consequently, the spectrum of the unbounded operator $ D_0^* D_0$ consists of eigenvalues diverging to infinity.

Using formulas \eqref{D_0def} and \eqref{D_0*def} we obtain the following system of equations for $\widehat {D_0^*}\widehat {D_0}$.
\begin{equation*}
\begin{aligned}
(\widehat {D_0^*}\widehat {D_0}f)_0&=f_0-f_1\\
(\widehat {D_0^*} \widehat {D_0}f)_n&=p^n \left((\widehat {D_0}f)_n-\frac 1p (\widehat {D_0}f)_{n-1}\right)\\
&=p^{2n-2}[-p^2f_{n+1}+(1+p^2)f_n-f_{n-1}] \textrm{\, for any } n\geq 1.
\end{aligned}
\end{equation*}

We remark at this point that we could equivalently study the spectrum of $\widehat {D_0} \widehat {D_0^*}$; however the equations for the latter operator are not any simpler than the formulas for $\widehat {D_0^*}\widehat {D_0}$. Obviously, with the absence of kernels, the eigenvalue equations for both operators yield the same eigenvalues.  
\smallskip

The problem is now to solve the following eigenvalue equations  for $\widehat {D_0^*}\widehat {D_0}$: 

\begin{equation}\label{EVE}
\begin{aligned}
p^{2n-2}[-p^2f_{n+1}+(1+p^2)f_n-f_{n-1}]&=\lambda f_n;\textrm{ for } n\geq 1\\
f_0-f_1&=\lambda f_0,\\
\end{aligned}
\end{equation}
with $f_n \in \ell^2(\mathbb Z_{\geq 0})$. 

The key step in solving the system of equations \eqref{EVE} is the following result which asserts that all eigenvectors of $\widehat {D_0^*}\widehat {D_0}$ take the special exponential sum form $f_n=\sum_{k=0}^\infty c(k)p^{-nk}$ with rapidly decaying coeffients $c(k)$. This result is a form of elliptic regularity of the operator $\widehat {D_0^*}\widehat {D_0}$.

\begin{theo}\label{EVForm}
Let $\{f_{n}(\lambda)\}$ be an eigenvector of $\widehat {D_0^*}\widehat {D_0}$ with eigenvalue $\lambda$. Then the following statements are true.
\begin{enumerate}
\item The sequence $\{f_n(\lambda)\}_{n=0}^{\infty}$ belongs to $\ell^1(\mathbb Z_{\geq 0})$.
\item The eigenvector $f_n(\lambda)$ can be uniquely expressed in the form 
\begin{equation}\label{expform}
f_{n}(\lambda)=\sum_{k=1}^{\infty} c(2k)p^{-2nk},
\end{equation}
where the coefficients $c(2k)$ decay exponentially in $k$.
\item The coefficients $c(2k)$ satisfy the equations
\begin{equation*}c(2)=\left(\frac{\lambda}{1-p^{-2}}\right)\sum_{k=0}^{\infty} f_k,
\end{equation*}
and, for $ k\geq 2$,  
\begin{equation}\label{coeff}
c(2k)=\left(\frac{-\lambda}{1-p^{-2}}\right)^{k-1}\frac{c(2)p^{k(k-1)}(p^2-1)^{k-2}}{(p^4-1)^2(p^6-1)^2\ldots(p^{2k-2}-1)^2 (p^{2k}-1)}.
\end{equation}
\item If the remainder  $r_n(2N)$ is defined by the formula:
\begin{equation*}
f_n(\lambda)=c(2)p^{-2n}+c(4)p^{-4n}+c(6)p^{-6n}+ \ldots + c(2N-2)p^{-(2N-2)n}+ r_n(2N),
\end{equation*}
then $\{r_n(2N)\}_{n=0}^{\infty}\rightarrow 0$ as $N \rightarrow \infty$ in the $\ell^1$ norm.
\end{enumerate}
\end{theo}
\smallskip

\begin{proof}
The main idea of the proof is to rewrite the equations \eqref{EVE} in an integral equation form and then use it iteratively to produce the solution. To this end we regroup the terms in the first equation of system \eqref{EVE} above to obtain: 
\begin{equation*}
\left(f_n-f_{n-1}\right)-p^2 \left(f_{n+1}-f_n\right)=\lambda p^{2-2n} f_n.
\end{equation*}

Using the notation $\Delta f_n:=f_{n+1}-f_n$, we can then rewrite the system of equations \eqref{EVE} as follows. 
\begin{equation}\label{EVE1}
\begin{aligned}
\Delta f_n&=p^{-2}\left(\Delta f_{n-1}-\lambda p^{2-2n}f_n \right)\, \textrm{ for } n \geq 1\\
\Delta f_0&=f_1-f_0=-\lambda f_0.\\
\end{aligned}
\end{equation}

Iteratively, with the help of equations \eqref{EVE1}, we obtain the following formula for $\Delta f_n$:
\begin{equation}\label{Deltafn}
\Delta f_n=-p^{-2n}\lambda (f_0+f_1+\ldots +f_n), \, n \geq 0.
\end{equation}

Equation \eqref{Deltafn} is a one-step linear difference equation, so it has one-parameter family of solutions. However, since we are looking for the solution in the Hilbert space we need to choose one that vanishes at infinity. This leads to the following formula for $f_n$:
\begin{equation*}
f_n=\sum_{l=n}^{\infty} \lambda p^{-2l}\sum_{k=0}^l f_k.
\end{equation*} 

Interchanging the summation indices of the above formula we obtain:
\begin{equation}\label{E2fn}
\begin{aligned}
f_n&=\sum_{k=0}^{n} \sum_{l=n}^{\infty}\lambda p^{-2l} f_k+\sum_{k=n+1}^{\infty} \sum_{l=k}^{\infty}\lambda p^{-2l} f_k\\
&=\frac{\lambda}{(1-p^{-2})}\left[p^{-2n}\sum_{k=0}^n f_k+\sum_{k=n+1}^{\infty}p^{-2k} f_k\right].
\end{aligned}
\end{equation}

Thus we can estimate:
\begin{equation*}
\sum_{n=0}^{\infty}|f_n| \leq \frac{\lambda}{(1-p^{-2})}\left[\sum_{n=0}^{\infty}p^{-2n}\sum_{k=0}^n |f_k|+\sum_{n=0}^{\infty}\sum_{k=n+1}^{\infty}p^{-2k} |f_k|\right].
\end{equation*}

By interchanging the summation indices in the first sum above and evaluating the sum over $n$ we obtain:
\begin{equation*}
\sum_{n=0}^{\infty}p^{-2n}\sum_{k=0}^n |f_k| = \sum_{k=0}^{\infty} |f_k| \left(\frac{p^{-2k}}{1-p^{-2}}\right).
\end{equation*}

Using Cauchy-Schwartz inequality and the fact that $f \in \ell^2(\mathbb N)$ we conclude that this sum is bounded:
\begin{equation*}
\sum_{k=0}^{\infty}|f_k|\left(\frac{p^{-2k}}{1-p^{-2}}\right)\leq \left(\frac{1}{1-p^{-2}}\right)\sqrt{\frac{1}{1-p^{-4}}}\,\|f\|_2 < \infty.
\end{equation*}

Notice that for the second sum we have:
\begin{equation*}
\sum_{n=0}^{\infty}\sum_{k=n+1}^{\infty}p^{-2k} |f_k|=\sum_{n=0}^{\infty}p^{-2n}\sum_{l=1}^{\infty}p^{-2l}|f_{n+l}|.
\end{equation*}

Once again using Cauchy-Schwartz inequality we see that the second sum is finite:
\begin{equation*}
\sum_{n=0}^{\infty}p^{-2n}\sum_{l=1}^{\infty}p^{-2l}|f_{n+l}| \leq \sqrt{\frac{p^{-4}}{1-p^{-4}}}\left(\frac1{1-p^{-2}}\right)\|f\|_2< \infty .
\end{equation*}
This verifies that $\{f_n\}\in \ell^1(\mathbb N)$.

\bigskip

To prove the second part of Theorem \ref{EVForm},  we observe that equation \eqref{E2fn} gives; 
\begin{equation*}
f_n=\frac{\lambda p^{-2n}}{(1-p^{-2})}\left[\sum_{k=0}^{\infty} f_k-\sum_{k=n+1}^{\infty} f_k+\sum_{l=1}^{\infty}p^{-2l} f_{n+l}\right].
\end{equation*}

Rearranging the terms on the right hand side of the equation to isolate the coefficient of $p^{-2n}$ we get:
\begin{equation}\label{E3fn}
f_n=\left(\frac{\lambda}{1-p^{-2}}\sum_{k=0}^{\infty} f_k\right)p^{-2n}-\lambda p^{-2n} \sum_{l=1}^{\infty}\left(\frac{1-p^{-2l}}{1-p^{-2}}\right)f_{n+l},
\end{equation}
from which we extract the coefficient  
\begin{equation*}
c(2):=\left(\frac{\lambda}{1-p^{-2}}\right)\sum_{k=0}^{\infty} f_k.
\end{equation*}
 Notice that $c(2)$ is well defined due to part (1).

Recursively applying this formula for $f_n$ on the right hand side of equation \eqref{E3fn} we obtain:
\begin{equation*}
f_n=c(2)p^{-2n}-\lambda p^{-2n}\sum_{l=1}^{\infty}\left(\frac{1-p^{-2l}}{1-p^{-2}}\right)\left(c(2)p^{-2n-2l}-\lambda p^{-2n-2l} \sum_{k=1}^{\infty}\left(\frac{1-p^{-2k}}{1-p^{-2}}\right) f_{n+l+k}\right).
\end{equation*}

Once again we rearrange the terms to extract the coefficient $c(4)$ of $p^{-4n}$.

\begin{equation*}
\begin{aligned}
f_n=c(2)p^{-2n}&+\left(-\lambda c(2)\sum_{l=1}^{\infty}\left(\frac{p^{-2l}-p^{-4l}}{1-p^{-2}}\right)\right)p^{-4n} \\
&+  \frac{\lambda^2}{p^{4n}}\sum_{l=1}^{\infty}\left(\frac{p^{-2l}-p^{-4l}}{1-p^{-2}}\right)\sum_{k=1}^{\infty}\left(\frac{1-p^{-2k}}{1-p^{-2}}\right) f_{n+l+k}.
\end{aligned}
\end{equation*}

This gives:
\begin{equation*}
c(4)=\frac{-\lambda c(2)}{(1-p^{-2})}\cdot\frac{p^2}{(p^4-1)}\ .
\end{equation*}

By repeatedly applying this process we can obtain an expansion of $f_n$ in powers of $p^{-2n}$, provided the remainder $r_n(2N)$ goes to zero as $N\to\infty$. We prove a stronger $\ell^1$ estimate on $r_n(2N)$ below, implying the pointwise convergence needed for the existence of the expansion of $f_n$.

\smallskip

Using induction we readily establish that the coefficients $c(2k)$ of this expansion are in general given by the formula:

\begin{equation}\label{Fcoeff}
c(2k)=\left(\frac{-\lambda}{1-p^{-2}}\right)^{k-1}\frac{c(2)p^{k(k-1)}(p^2-1)^{k-2}}{(p^4-1)^2(p^6-1)^2\ldots(p^{2k-2}-1)^2 (p^{2k}-1)}\ .
\end{equation}
\smallskip

Next we estimate the growth of the coefficients $c(2k)$. Simplifying the formula for $c(2k)$ we obtain:

\begin{equation*}
\begin{aligned}
c(2k)&=\frac{c(2)\lambda^{k-1}}{p^{(k-2)(k-1)}(1-\frac1{p^{2}})(1-\frac 1{p^{4}})^2(1-\frac1{p^{6}})^2\ldots (1-\frac1{p^{2k-2}})^2(1-\frac1{p^{2k}})}\\
&= \frac{\lambda^k (1-\frac1{p^{2k}}) \sum_{m=0}^{\infty}f_m }{p^{(k-2)(k-1)}(1-\frac1{p^{2}})^2(1-\frac 1{p^{4}})^2(1-\frac1{p^{6}})^2\ldots (1-\frac1{p^{2k-2}})^2(1-\frac1{p^{2k}})^2}\ .
\end{aligned}
\end{equation*}

Since $\prod_{i=1}^k (1-p^{-2i})^2 \geq \prod_{i=1}^{\infty} (1-p^{-2i})^2$ and the infinite product is a finite nonzero number,  we obtain the following estimate for the coefficients $c(2k)$:

\begin{equation*}
|c(2k)|\leq \frac{|\lambda|^{k}\|f\|_1}{p^{(k-2)(k-1)}\prod_{i=1}^{\infty} (1-p^{-2i})^2}\ , 
\end{equation*}
which shows that they decay exponentially. This establishes both the second and the third part of the theorem.

\smallskip

Finally we estimate the remainder term $r_n(2N)$. Using induction it is easily established that the remainder $r_n(2N)$ satisfies the following summation formula:

\begin{equation*}
\begin{aligned}
r_n(2N)=&\frac{(-\lambda)^N}{p^{2nN}}\sum_{l_1=1}^{\infty}\sum_{l_2=1}^{\infty} \ldots \sum_{l_N=1}^{\infty}\frac{({p^{-(2N-2)l_1}}-p^{-2Nl_1})}{1-p^{-2}}\frac{(p^{-(2N-4)l_2}-p^{-(2N-2)l_2})}{1-p^{-2}}\\
&\ldots\frac{(1-p^{-2l_N})}{1-p^{-2}}
f_{n+l_1+l_2+\ldots+l_N}.
\end{aligned}
\end{equation*}

 Estimating the $\ell^1$ norm we see that:

\begin{equation}\label{error}
\begin{aligned}
\sum_{n=1}^{\infty}|r_n(2N)|\leq &\sum_{l_1=1}^{\infty}\sum_{l_2=1}^{\infty} \ldots \sum_{l_N=1}^{\infty}\frac{|\lambda|^N}{p^{2nN}(1-p^{-2})^N}({p^{-(2N-2)l_1}}-p^{-2Nl_1})(p^{-(2N-4)l_2}-p^{-(2N-2)l_2})\\
&\ldots (1-p^{-2l_N})|f_{n+l_1+l_2+\ldots+l_N}|.\\
\end{aligned}
\end{equation}

Notice that the term $\sum_{l_N=1}^{\infty}(1-p^{-2l_N})|f_{n+l_1+l_2+\ldots+l_N}|$ can be estimated as follows:

\begin{equation*}
\begin{aligned}
\sum_{l_N=1}^{\infty}(1-p^{-2l_N})|f_{n+l_1+l_2+\ldots+l_N}|&\leq \sup_{l_N \geq 1}(1-p^{-2l_N})\sum_{l_N=1}^{\infty}|f_{n+l_1+l_2+\ldots+l_N}|\\
&\leq \|f\|_1
\end{aligned}
\end{equation*}
where the last line can be justified by changing the summation index in the previous line appropriately. Moreover, we can explicitly calculate each sum that appears in formula  
\eqref{error}. For example:

\begin{equation*}
\begin{aligned}
\sum_{n=1}^{\infty}p^{-2Nn}&=\frac{p^{-2N}}{1-p^{-2N}},\\
\sum_{l_1=1}^{\infty}({p^{-(2N-2)l_1}}-p^{-2Nl_1})&=\frac{(1-p^{-2})p^{-(2N-2)}}{(1-p^{-(2N-2)})(1-p^{-2N})},\\
\sum_{l_2=1}^{\infty}(p^{-(2N-4)l_2}-p^{-(2N-2)l_2})&=\frac{(1-p^{-2})p^{-(2N-4)}}{(1-p^{-(2N-4)})(1-p^{-(2N-2)})},
\end{aligned}
\end{equation*}
and so on. Substituting all these values into the formula \eqref{error} we get the following estimate:

\begin{equation*}
\sum_{n=1}^{\infty}|r_n(2N)|\leq \frac{|\lambda|^N}{(1-p^{-2})^N}\cdot \frac{p^{-2N}}{1-p^{-2N}} \cdot \frac{(1-p^{-2})p^{-(2N-2)}}{(1-p^{-(2N-2)})(1-p^{-2N})}\ldots \frac{(1-p^{-2})p^{-2}}{(1-p^{-4})(1-p^{-2})}\ .
\end{equation*}

Simplifying this expression we obtain:

\begin{equation*}
\sum_{n=1}^{\infty}|r_n(2N)|\leq \frac{|\lambda|^N}{p^{N(N+1)}\prod_{k=1}^N (1-p^{-2k})^2} \leq \frac{|\lambda|^N}{p^{N(N+1)}\prod_{k=1}^{\infty} (1-p^{-2k})^2}. 
\end{equation*}
Since $\prod_{k=1}^N (1-p^{-2k})^2 < \infty$ we see that $\sum_{n=1}^{\infty}|r_n(2N)| \rightarrow 0$ as $N \rightarrow \infty$. 

\smallskip

Finally we will prove uniqueness of the expansion of $f_n(\lambda)$. Consider the analytic function $f(z)= \sum_{k=1}^{\infty}c(2k)z^k$. From the above estimate of the coefficients $c(2k)$ we see that the radius of convergence $R$ of the power series for $f$ is given by:

\begin{equation*}
\frac 1R= \limsup_{k \rightarrow \infty} \sqrt[k]{|c(2k)|} \leq |\lambda| \;\limsup_{k \rightarrow \infty}\sqrt[k]{\frac{\|f\|_1}{\prod_{i=1}^{\infty} (1-p^{-2i})^2}} \cdot \frac1{p^{k-3+2/k}}.
\end{equation*}

Therefore, $R=\infty$ and the function $f(z)$ is entire. Therefore, in particular:
\begin{equation*}
f(p^{-2n})=\sum_{k=1}^{\infty}c(2k)p^{-2nk}=f_n(\lambda),
\end{equation*}
and so the coefficients $c(2k)$ are uniquely determined by $f_n(\lambda)$, because an analytic function is completely determined by its values on a convergent sequence of points, \cite{AL}.

\end{proof}
\bigskip

\begin{rem} The collection of $\ell^2$ functions with a power series representation of the form \eqref{expform} is fairly restrictive which is clear from the fact that $\lim_{n \rightarrow \infty} p^{2n}f(n)= c(2)$.
 It can be easily shown that the set of $\ell^2$ functions with this power series representation is dense in the space of all $\ell^2$ functions.
\end{rem}

The difficult part already completed, we can now state our main theorem. 
\begin{theo}
The spectrum of the operator $\widehat {D_0^*} \widehat {D_0}$ consists of simple eigenvalues $\{\lambda_n\}$ which are the roots of the $q$-hypergeometric function  $\lambda\mapsto\ _1\phi_1\left(\substack {0\\q};q,\lambda \right)$, with $q=\frac1{p^2}$.
\end{theo}

\begin{proof} 
Substituting $f_n=\sum_{k=1}^\infty c(2k)p^{-2nk}$ and formula  \eqref{Fcoeff} into the initial condition of system \eqref{EVE} and dividing throughout by $c(2)$ we obtain the following:
 
\begin{equation}\label{NEVE}
\frac{1}{p^2}+\lambda -1+ \sum_{k=2}^\infty \frac{\left(p^{-2k}+\lambda -1\right) \lambda^{k-1}p^{2k-2}} {\prod_{j=2}^k(1-p^{2j})\left(1-\cfrac1{p^{2j-2}}\right)} =0.
\end{equation}

The infinite sum on the left hand side of the above equation, call it $S_1$, can be simplified by first breaking it up into two terms, extracting some terms and then recombining as follows: 

\begin{equation*}
\begin{aligned}
S_1&=\sum_{k=2}^\infty \frac{\left(p^{-2k} -1\right) \lambda^{k-1}p^{2k-2}} {\prod_{j=2}^k(1-p^{2j})\left(1-\cfrac1{p^{2j-2}}\right)} + \sum_{k=3}^\infty \frac{\lambda^{k-1}p^{2k-4}}{\prod_{j=2}^{k-1}(1-p^{2j})\left(1-\cfrac1{p^{2j-2}}\right)}\\
&=\frac{\left(\frac1{p^4}-1\right)\lambda p^2}{\left(1-p^4\right)\left(1-\frac1{p^2}\right)}+ \sum_{k=3}^\infty \frac{p^{2k-4}\lambda^{k-1}\left[p^2(p^{-2k}-1)+(1-p^{2k})\left(1-\frac1{p^{2k-2}}\right)\right]}{\prod_{j=2}^{k}(1-p^{2j})\left(1-\cfrac1{p^{2j-2}}\right)}\ .\\
\end{aligned}
\end{equation*}

Using the substitution $q=\frac1{p^2}$ equation \eqref{NEVE} can be written as
\begin{equation*}
(q-1)+ \frac{\lambda}{(1-q)}+ \sum_{k=3}^\infty \frac{q^{2-k}\lambda^{k-1}}{\prod_{j=2}^{k-1}(1-q^{-j})\prod_{j=2}^{k}\left(1-q^{j-1}\right)}=0.
\end{equation*}
Notice that at $k=2$ the expression
\begin{equation*} 
\frac{q^{2-k}\lambda^{k-1}}{\prod_{j=2}^{k-1}(1-q^{-j})\prod_{j=2}^{k}\left(1-q^{j-1}\right)}
\end{equation*} yields the value $\frac{\lambda}{(1-q)}$. Thus the above equation is in fact equal to:

\begin{equation*}
(q-1)+ \sum_{k=2}^\infty \frac{q^{2-k}\lambda^{k-1}}{\prod_{j=2}^{k-1}(1-q^{-j})\prod_{j=2}^{k}\left(1-q^{j-1}\right)}=0.
\end{equation*}

Now we rearrange the terms in the infinite sum in order to compare it with the hypergeometric function $_1\phi_1\left(\substack {0\\q};q,\lambda \right)$.

\begin{equation*}
\begin{aligned}
(q-1)+ \sum_{k=2}^\infty \frac{q^{2-k}\lambda^{k-1}}{\prod_{j=2}^{k-1}(1-q^{-j})\prod_{j=2}^{k}\left(1-q^{j-1}\right)}&=(q-1)+ \sum_{k=2}^\infty \frac{(-1)^{k}\lambda^{k-1}(1-q)q^{\frac{(k-2)(k-1)}{2}}}{\prod_{j=1}^{k-1}(1-q^{j})^2}\\
&=(q-1)- \sum_{k=1}^\infty \frac{(-1)^{k}\lambda^{k}(1-q)q^{\frac{k(k-1)}{2}}}{\prod_{j=1}^{k}(1-q^{j})^2}\\
&=1+ \sum_{k=1}^\infty \frac{(-1)^{k}\lambda^{k}q^{\frac{k(k-1)}{2}}}{\prod_{j=1}^{k}(1-q^{j})^2}\ .
\end{aligned}
\end{equation*}

By using the notation 
\begin{equation*}
(a;q)_n=(1-a)(1-aq)\ldots(1-aq^{n-1})
\end{equation*}
 and the above computation, we can rewrite the eigenvalue equation as
\begin{equation*}
1+ \sum_{k=1}^\infty \frac{(-1)^{k}\lambda^{k}q^{\frac{k(k-1)}{2}}}{(q;q)_k^2}=0.
\end{equation*}

The function $_1\phi_1$ of four variables $a_0, b_1,q,z$ is defined as
\begin{equation*}
_1\phi_1\left(
\begin{array}{c@{}c@{}c}
\begin{array}{c}
 a_0\\
 b_1\\
\end{array} ;&\ q^2 ,&\ z\\
\end{array}\right)=\sum_{n=0}^\infty \frac{(a_0;q^2)_n}{(q^2;q^2)_n(b_1;q^2)_n}(-1)^nq^{2\binom{n}{2}}z^n.
\end{equation*}

Thus, if $\lambda$ is an eigenvalue, we get:
\begin{equation*}
_1\phi_1\left(\substack {0\\q};q,\lambda \right)=1+ \sum_{k=1}^\infty \frac{(-1)^{k}\lambda^{k}q^{\frac{k(k-1)}{2}}}{(q;q)_k^2}=0,
\end{equation*}
showing that the eigenvalues of the operator $\widehat {D_0^*}\widehat {D_0}$ are the roots of the above $q$ - hypergeometric function.
Conversely, the above calculation shows that given a root $\lambda$ of $\lambda\mapsto\ _1\phi_1\left(\substack {0\\q};q,\lambda \right)$ the formula \eqref{expform} with arbitrary $c(2)$ and
other coefficients $c(2k)$ given by \eqref{coeff} gives, up to a constant, the unique eigenvector of $\widehat {D_0^*}\widehat {D_0}$ corresponding to eigenvalue $\lambda$. By the analysis in \cite{KMR} the whole spectrum of $\widehat {D_0^*}\widehat {D_0}$ consists of such eigenvalues.
\end{proof}
\smallskip


\maketitle
\section{Spectral properties}

\subsection{Spectrum of $D^*D$}
Computation of the spectrum of $D^*D$ is based on decomposition \eqref{decomD*D} and the analysis of the spectrum of $\widehat {D_0^*}\widehat {D_0}$ in the previous section.
\smallskip

\begin{theo}
Let $\{\lambda_n\}$ be the eigenvalues of the operator $\widehat {D_0^*}\widehat {D_0}$ and let $\widehat D_g^*\widehat D_g$ be as in formula \eqref{Dg}. Then,
\begin{enumerate}
\item The spectrum of $\wh {D_g^* }\wh {D_g}$ consists of simple eigenvalues $\{p^{2m}\lambda_n\}$  i.e., $\sigma (\wh {D_g^* }\wh {D_g})= \bigcup_n \{p^{2m} \lambda_n\}$. 
\item $\sigma ( D^*  D)= \sigma (\wh {D^*} \wh D)= \bigcup_{m,n} \{p^{2m} \lambda_n\}$. Moreover, each eigenvalue of $\wh {D^*} \wh D$ occurs with multiplicity $p^m(1-\frac 1p)$.

\end{enumerate}
\end{theo}

\begin{proof}

The above results follow directly from the decomposition \eqref{decomD*D}.  Since the number of different values of $r$ less than $p^m$ that are relatively prime to $p$ is equal to $p^m-p^{m-1}$, each eigenvalue of $\widehat D^*\widehat D$ in $H$ has multiplicity $p^m(1-\frac 1p)$.  
\end{proof}

\begin{cor}
The operator $(D^*D)^{-1}$ is a $s$-th Schatten class opeartor for all $s \geq 1$.
\end{cor}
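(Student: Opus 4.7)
The plan is to reduce the statement to the convergence of an explicit double series built from the eigenvalue data already assembled in the previous theorem. Since $(D^*D)^{-1}$ is self-adjoint, positive and compact, it is of Schatten class $s$ if and only if $\Tr((D^*D)^{-s})<\infty$, i.e.\ iff the $s$-th powers of the eigenvalues of $D^*D$ are summable (counted with multiplicity). By the preceding theorem, the spectrum of $D^*D$ is exactly $\{p^{2m}\lambda_n\}_{m\geq 0,\; n\geq 1}$, where $\{\lambda_n\}$ are the (simple, positive) roots of $_1\phi_1\!\left(\substack{0\\q};q,\lambda\right)$ with $q=p^{-2}$, and each $p^{2m}\lambda_n$ occurs with multiplicity $p^m(1-1/p)$.

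Writing out the trace I would therefore get
\begin{equation*}
\Tr\bigl((D^*D)^{-s}\bigr) \;=\; \Bigl(1-\tfrac{1}{p}\Bigr)\left(\sum_{m=0}^{\infty} p^{\,m(1-2s)}\right)\!\left(\sum_{n=1}^{\infty}\lambda_n^{-s}\right),
\end{equation*}
so finiteness reduces to checking both factors on the right. The first is a geometric series that converges whenever $1-2s<0$, i.e.\ whenever $s>1/2$, so in particular for every $s\geq 1$.

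For the second factor I would appeal to the asymptotic bounds for the zeros of $_1\phi_1\!\left(\substack{0\\q};q,\lambda\right)$ with $0<q<1$ established in \cite{ABC}: the roots $\lambda_n$ grow super-exponentially in $n$ (on the scale of $q^{-n^2/2}=p^{n^2}$), which makes $\sum_n \lambda_n^{-s}$ convergent for every $s>0$. The only point requiring outside input is this growth rate of $\lambda_n$; once it is invoked the rest is a direct geometric-series computation, and in fact the argument gives the slightly stronger conclusion that $(D^*D)^{-1}$ lies in Schatten class $s$ for all $s>1/2$.
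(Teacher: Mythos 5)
Your argument follows the paper's proof essentially step for step: use the decomposition \eqref{decomD*D} to factor the trace as $\Tr\bigl((D^*D)^{-s}\bigr)=\bigl(1-\tfrac1p\bigr)\bigl(\sum_{m\geq 0}p^{m(1-2s)}\bigr)\bigl(\sum_{n}\lambda_n^{-s}\bigr)$, note that the geometric series in $m$ converges for $s>\tfrac12$, and control $\sum_n\lambda_n^{-s}$ via the bounds on the zeros from \cite{ABC}. The one point to correct is your description of the growth of the eigenvalues: the roots of $\lambda\mapsto{_1\phi_1}\left(\substack{0\\q};q,\lambda\right)$ grow only \emph{exponentially}, $\lambda_n\sim p^n$ (this is the content of \eqref{ulbounds}; equivalently, the positive roots $\omega_k(q)$ of $J_0^{(3)}$ satisfy $\omega_k\sim q^{-k/2}$), not super-exponentially on the scale $q^{-n^2/2}=p^{n^2}$. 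The Gaussian factor $q^{k(k-1)/2}$ appears in the Taylor coefficients of this entire function of order zero, not in the location of its zeros, whose consecutive ratios are asymptotically of size $q^{-1}$. Fortunately exponential growth is all that is needed: the lower bound in \eqref{ulbounds} gives $\lambda_n^{-s}\leq C^{\,s}p^{-ns}$, hence $\sum_n\lambda_n^{-s}<\infty$ for every $s>0$, and your conclusion --- including the sharper statement that $(D^*D)^{-1}$ lies in the $s$-th Schatten class for all $s>\tfrac12$ --- stands.
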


\begin{proof}
From the decomposition \eqref{decomD*D} we see that:

\begin{equation*}
(D^*D)^{-s}= \bigoplus_{\frac r{p^m}\in \mathcal G_p}p^{-2ms}\,(D_0^*D_0)^{-s},
\end{equation*}
from which we compute the following trace:
\begin{equation*}
\begin{aligned}
\Tr (D^*D)^{-s}&=\sum_{\frac r{p^m}\in \mathcal G_p}p^{-2ms}\, \Tr (D_0^*D_0)^{-s}\\
&=\sum_{m=0}^{\infty}\sum_{\substack {0 \leq r < p^m \\ p \nmid r}}p^{-2ms}\, \Tr (D_0^*D_0)^{-s}.
\end{aligned}
\end{equation*}

Since the number of nonnegative $r$'s less than $p^m$ and relatively prime to $p$ is equal to the Euler number of $p^m$, we can compute the sum over $m$ provided that $s > \frac 12$:
\begin{equation}
\begin{aligned}\label{traceD*D}
\Tr (D^*D)^{-s}&=\sum_{m=0}^{\infty} (p^m-p^{m-1})p^{-2ms}\, \Tr (D_0^*D_0)^{-s}\\
&=(1-\frac 1p)\left(\frac1{1-p^{1-2s}}\right) \,\Tr (D_0^*D_0)^{-s}.
\end{aligned}
\end{equation}

From \cite{ABC} we have that $\lambda_n \leq p^n$, so we can estimate the trace $\Tr (D_0^*D_0)^{-s}=\sum_{n=0}^{\infty}(\lambda_n)^{-s}$ as follows, provided $s>0$:

\begin{equation*}
\Tr (D_0^*D_0)^{-s}=\sum_{n=0}^{\infty}(\lambda_n)^{-s} \leq \sum_{n=0}^{\infty} p^{-ns}=\frac 1{1-p^{-s}}.
\end{equation*}

Summing up this information we see that,

\begin{equation*}
\Tr (D^*D)^{-s}\leq \left(1-\frac 1p \right)\left(\frac1{1-p^{1-2s}}\right)\left( \frac 1{1-p^{-s}}\right)
\end{equation*}
whenever $s> \frac 12$. Thus for any $s \geq 1$ the $s$-th Schatten norm of $(D^*D)^{-1}$ is finite.
\end{proof}
\bigskip

\bigskip

\subsection{Analytic continuation of the zeta functions}

Using formula \eqref{traceD*D} we can express the zeta function associated with the operator $D^*D$, denoted $\zeta_D(s)$, in terms of $\zeta_{D_0}(s)$, the zeta function associated with the operator $D_0^*D_0$:

\begin{equation}\label{zeta}
\zeta_D(s)=(1-\frac 1p)\left(\frac1{1-p^{1-2s}}\right) \zeta_{D_0}(s).
\end{equation}

We now consider the analytic continuation of $\zeta_{D_0}(s)$.

\begin{theo}
$\zeta_{D_0}(s)$ is  holomorphic for $\Re s >0$ and can be analytically continued to a meromorphic function for $\Re s > -2$.
\end{theo}

\begin{proof}
To show that $\zeta_{D_0}(s)$ is holomorphic in the region $\Re s >0$ we estimate:

\begin{equation*}
|\zeta_{D_0}(s)|\leq \sum_{n=1}^{\infty}\left|\frac 1{\lambda_n^{\Re s + i \, \Im s}}\right|= \sum_{n=1}^{\infty} \frac 1{\lambda_n^{\Re s}}\ ,
\end{equation*}
since $\lambda_n^{-i\, \Im s}$ is unimodular. From \cite{ABC} we know that the eigenvalues $\lambda_n$ of $D_0^*D_0$ satisfy the following upper and lower bounds:

\begin{equation}\label{ulbounds}
p^n \left(1-\frac{p^{-2n}}{1-p^{-2n}}\right) < \lambda_n < p^n.
\end{equation}

Thus we get:
\begin{equation*}
|\zeta_{D_0}(s)| \leq \sum_{n=1}^{\infty} \frac 1{\left(p^n \left(1-\frac{p^{-2n}}{1-p^{-2n}}\right)\right)^{\Re s}}.
\end{equation*}

We have an elementary inequality:
\begin{equation*}
\frac{p^{-2n}}{1-p^{-2n}}= 1- \frac 1{p^{2n}-1} \geq \frac{p^2-2}{p^2-1},
\end{equation*}
which holds since the left-hand side is an increasing function of $n$,  while the right-hand side is its value at $n=1$. Therefore, we get:

\begin{equation*}
|\zeta_{D_0}(s)| \leq \left(\frac{p^2-1}{p^2-2}\right)^{\Re s} \,\sum_{n=1}^{\infty} \frac 1{p^{n\Re s}},
\end{equation*}
which is convergent for $\Re s >0$. Consequently, $\zeta_{D_0}(s)$ is holomorphic in $\Re s >0$. 

\smallskip

We now show that $\zeta_{D_0}(s)$ can be analytically continued to $\Re s > -2$. Since $\lambda_n$ behaves like $p^n$, the analytic continuation of $\zeta_{D_0}(s)$ will be achieved by a perturbative argument from the meromorphic function obtained from the zeta function by replacing  $\lambda_n$ with $p^n$. First we write:

\begin{equation*}
p^{-ns}- \lambda_n^{-s}=e^{-sn\ln p}-e^{-s \ln(\lambda_n)}=\int_{-\ln(\lambda_n)}^{-n\ln p}\frac d{dt}e^{ts} dt=s\int_{-\ln(\lambda_n)}^{-n\ln p}e^{ts} dt.
\end{equation*}

Thus, we obtain:

\begin{equation*}
|p^{-ns}- \lambda_n^{-s}|\leq |s|\int_{-\ln(\lambda_n)}^{-n\ln p}|e^{ts}| dt=|s|\int_{-\ln(\lambda_n)}^{-n\ln p}e^{t \Re s} dt.
\end{equation*}
In this integral we can estimate the integrand  by its maximum on the interval of integration $[-\ln(\lambda_n),-n\ln p]$ to arrive at the following estimate:

\begin{equation*}
|p^{-ns}- \lambda_n^{-s}|\leq \begin{cases}
															|s|(n \ln p - \ln(\lambda_n))e^{-n \ln p \Re s} & \textrm{ if } \Re s \leq 0 \\
															|s|(n \ln p - \ln(\lambda_n))e^{- \ln(\lambda_n) \Re s} & \textrm{ if } \Re s> 0. \\
															\end{cases}
\end{equation*}

Inequality \eqref{ulbounds} implies that:
\begin{equation*}
\ln\left(\frac {p^n}{\lambda_n}\right)< -\ln \left(1-\frac{p^{-2n}}{1-p^{-2n}}\right)= \frac{p^{-2n}}{1-p^{-2n}} \sum_{k=0}^{\infty} \frac 1{k+1} \left(\frac{1}{p^{2n}-1}\right)^k. 
\end{equation*}
Since $1-p^{-2n}> \frac 12$ for $n\geq 1$, we can estimate the above as:

\begin{equation*}
\ln\left(\frac {p^n}{\lambda_n}\right)< 2p^{-2n}\sum_{k=0}^{\infty} \frac 1{k+1} \left(\frac{1}{p^2-1}\right)^k= -2p^{-2n}\ln \left(1- \frac 1{p^2-1}\right).
\end{equation*}

Consequently, if $\Re s \leq 0$, we have: 
\begin{equation*}
|p^{-ns}- \lambda_n^{-s}|\leq -2 |s|\ln \left(1- \frac 1{p^2-1}\right)p^{-2n}p^{-n \Re s}.
\end{equation*}

This lets us estimate the difference of the series as follows:

\begin{equation*}
\left|\sum_{n=1}^{\infty}  (p^{-ns}- \lambda_n^{-s})\right| \leq -2 \ln \left(1- \frac 1{p^2-1}\right)|s| \sum_{n=1}^{\infty}p^{-n(2+ \Re s)}.
\end{equation*}

The series $\sum_{n=1}^{\infty}p^{-n(2+ \Re s)}$ is convergent for $\Re s > -2$ hence, by the Weierstrass $M$ test, the series $\sum_{n=1}^{\infty}  (p^{-ns}- \lambda_n^{-s})$ converges uniformly for $\Re s > -2$ and hence it is analytic for $\Re s > -2$.

Moreover, since
\begin{equation*}
\sum_{n=1}^{\infty}p^{-ns}= \frac {p^{-s}}{1-p^{-s}}
\end{equation*}
is meromorphic in the complex plane with poles at $s= \frac {2 \pi i k}{\ln p}$,  $k \in \mathbb Z$, we obtain that the zeta function
$\sum_{n=1}^{\infty}\lambda_n^{-s}$ for meromorphic for $\Re s > -2$ with the above mentioned poles. 

\end{proof}

\begin{cor}
$\zeta_D(s)$ is meromorphic for $\Re s > -2$ with poles at $s=\frac{2 \pi ik}{\ln p}$,  and $s=\frac 12\left(1-\frac{2 \pi ik}{\ln p}\right)$, where $k\in \mathbb Z$.
\end{cor}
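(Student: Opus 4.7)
The plan is to simply combine the factorization in \eqref{zeta} with the analytic continuation of $\zeta_{D_0}(s)$ established in the preceding theorem. Since $\zeta_D(s) = \left(1-\tfrac{1}{p}\right)\frac{1}{1-p^{1-2s}}\,\zeta_{D_0}(s)$, the meromorphy of $\zeta_D(s)$ on the half-plane $\Re s > -2$ follows from the meromorphy of each factor on that region: $\zeta_{D_0}(s)$ is meromorphic for $\Re s > -2$ by the previous theorem, the constant $1-\tfrac{1}{p}$ is entire, and $\bigl(1-p^{1-2s}\bigr)^{-1}$ is meromorphic on all of $\mathbb{C}$. The union of the pole sets of these factors then gives the stated pole locations.

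More concretely, first I would locate the poles of $\bigl(1-p^{1-2s}\bigr)^{-1}$ by solving $p^{1-2s}=1$, i.e.\ $(1-2s)\ln p\in 2\pi i\mathbb{Z}$, which yields exactly $s=\tfrac{1}{2}\bigl(1-\tfrac{2\pi i k}{\ln p}\bigr)$, $k\in\mathbb{Z}$. These are simple poles, since the derivative of $1-p^{1-2s}$ in $s$ does not vanish. Second, I would recall from the preceding proof that the poles of $\zeta_{D_0}(s)$ in $\Re s>-2$ are precisely those of $\frac{p^{-s}}{1-p^{-s}}$, located at $s=\tfrac{2\pi i k}{\ln p}$, $k\in\mathbb{Z}$.

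Finally, I would note that the two families of poles are disjoint (the first family has $\Re s=\tfrac{1}{2}$, while the second has $\Re s=0$), so no cancellation can occur and every listed point is indeed a pole of $\zeta_D(s)$. Since there is essentially no obstacle here beyond this bookkeeping, the corollary follows. If anything, the only subtle point to double-check is that $\zeta_{D_0}$ does not vanish at $s=\tfrac{1}{2}(1-\tfrac{2\pi i k}{\ln p})$ in a way that would cancel a pole of $(1-p^{1-2s})^{-1}$, but since the analytic continuation of $\zeta_{D_0}$ produced in the previous theorem is given on that line by the sum of a convergent holomorphic series and the meromorphic $\tfrac{p^{-s}}{1-p^{-s}}$ which is finite there, this is not an issue.
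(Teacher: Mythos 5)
Your proposal is correct and is essentially the paper's own argument: the paper's proof of this corollary is literally the single sentence that it follows from the preceding theorem and equation \eqref{zeta}, and you have simply filled in the pole-location bookkeeping (solving $p^{1-2s}=1$ and noting the two pole families lie on the disjoint lines $\Re s=\tfrac12$ and $\Re s=0$). One small caveat: your closing remark only establishes that $\zeta_{D_0}$ is \emph{finite} on the line $\Re s=\tfrac12$, not that it is \emph{nonzero} at the points $s=\tfrac12\bigl(1-\tfrac{2\pi i k}{\ln p}\bigr)$, so cancellation of those poles is not strictly ruled out --- but the paper's own proof does not address this either.
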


\begin{proof}
The proof of this corollary follows from the theorem above and equation \eqref{zeta}.
\end{proof}


\section{Appendix}

In this section we record some basic properties and identities satisfied by the $q$ - hypergeometric function $_1 \phi_1$ we encountered in section 4. More on $q$ - hypergeometric functions can be found in \cite{GR}. We start with the general definition of these type of functions:
\begin{equation*}
_{r+1}\phi_s\left(
\begin{array}{c@{}c@{}c}
\begin{array}{c}
a_0,a_1,\ldots,a_r\\
b_1,b_2,\ldots,b_s\\
\end{array} ;&\ q ,&\ z\\
\end{array}\right)%
=\sum_{n=0}^\infty \frac{(a_0;q)_n(a_1;q)_n\ldots (a_r;q)_n}{(q;q)_n(b_1;q)_n\ldots (b_s;q)_n}\left((-1)^nq^{\binom{n}{2}}\right)^{s-r}z^n
\end{equation*}
where $b_j \neq q^{-n}$ for any $j,n$. 

 Here we  used the notation $(a;q)_n=(1-a)(1-aq)\ldots(1-aq^{n-1})$. We remark that $(a;q)_{\infty}=\prod_{j=0}^{\infty}(1-aq^j)$. When $s>r$ the above series converges for all $z$ while it converges for $|z|<1$ when $s=r$.

We are interested in the special case where $r=0, s=1$ and $a=0, b=q$, which leads to the formula:
\begin{equation*}
_1\phi_1\left(
\begin{array}{c@{}c@{}c}
\begin{array}{c}
 0\\
 q\\
\end{array} ;&\ q ,&\ z\\
\end{array}\right)=\sum_{n=0}^\infty \frac{1}{(q;q)_n(q;q)_n}(-1)^nq^{\binom{n}{2}}z^n.
\end{equation*}

The function $_1 \phi_1$ satisfies the Cauchy's sum:
\begin{equation*}
_1\phi_1\left(
\begin{array}{c@{}c@{}c}
\begin{array}{c}
 a\\
 b\\
\end{array} ;&\ q ,&\ b/a\\
\end{array}\right)=\frac{(b/a;q)_{\infty}}{(b;q)_{\infty}}.
\end{equation*}

 In \cite{ABC} the authors investigated the roots of the third Jackson $q$-Bessel function:
 \begin{equation*}
 J_{\nu}^{(3)}(z;q):= z^{\nu} \frac{(q^{\nu +1};q)_{\infty}}{(q;q)_{\infty}}\; {_1\phi_1}\left(
\begin{array}{c@{}c@{}c}
\begin{array}{c}
0\\
 q^{\nu+1}\\
\end{array} ;&\ q ,&\ qz^2\\
\end{array}\right),
 \end{equation*} 
where $0 < q <1$ and $z$ is a complex parameter. It is known that this function has infinitely many zeros, each of multiplicity one, all of them real. When $\nu=0$ we that the third Jackson $q$-Bessel function equals the function $_1 \phi_1$ which we used in this paper. 

We record the following transformation property of $_1 \phi_1$:
\begin{equation*}
_1\phi_1\left(
\begin{array}{c@{}c@{}c}
\begin{array}{c}
 0\\
 b\\
\end{array} ;&\ q ,&\ z\\
\end{array}\right)=\frac{(z;q)_{\infty}}{(b;q)_{\infty}}\ {_1\phi_1}\left(
\begin{array}{c@{}c@{}c}
\begin{array}{c}
 0\\
 z\\
\end{array} ;&\ q ,&\ b\\
\end{array}\right).
\end{equation*}
 
Starting with this transformation the authors in \cite{ABC} deduce that if $q< (1-q)^2$ then the positive roots $\omega_k(q)$, $k=1,2,3, \ldots$,  of $J_0^{(3)}(z;q)$, arranged in the increasing order satisfy the following:

\begin{equation*}
q^{-k/2+ \alpha_k(q)}< \omega_k(q)< q^{-k/2},
\end{equation*}
where 
\begin{equation*}
\alpha_k(q)=\frac{\log \left(1- \frac{q^k}{1-q^k}\right)}{\log q}.
\end{equation*}
In particular, this gives the asymptotic behavior $\omega_k \sim q^{-k/2}$ as $k \rightarrow \infty$. 
Additionally, those results give an upper and lower bound \eqref{ulbounds} for the roots of the specific $_1 \phi_1$ function needed in this paper.

\end{document}